\DeclareMathOperator*{\Tr}{tr}
\newcommand{\norm}[1]{\| #1 \|}
\newtheorem{theorem}{Theorem}
\newtheorem{remark}{Remark}
\newtheorem{ansatz}{Assumption}
\newcommand{\oprocendsymbol}{\hbox{$\bullet$}}
\newcommand{\oprocend}{\relax\ifmmode\else\unskip\hfill\fi\oprocendsymbol}
\newcommand{\floor}[1]{\lfloor #1 \rfloor}
\newcommand{\ceil}[1]{\lceil #1 \rceil}
\newcommand{\real}{\mathbb{R}}
\newcommand{\integers}{\mathbb{N}}
\title{Time-triggering versus event-triggering control over communication channels}
\author{Mohammad Javad Khojasteh \; Pavankumar Tallapragada \; Jorge
  Cort{\'e}s \; Massimo Franceschetti \thanks{M. J. Khojasteh and
    M. Franceschetti are with the Department of Electrical and
    Computer Engineering of University of California, San Diego.
    J. Cort{\'e}s is with the Department of Mechanical and Aerospace
    Engineering, University of California, San Diego. P. Tallapragada
    is with the Department of Electrical Engineering, Indian Institute
    of Science, Bengaluru.  \{mkhojasteh,massimo,cortes\}@ucsd.edu,
    pavant@ee.iisc.ernet.in}}
\begin{document}


\maketitle

\begin{abstract}
Time-triggered and event-triggered control strategies for stabilization of an unstable plant over a rate-limited communication channel subject to unknown, bounded  delay are studied and compared. Event triggering carries implicit information, revealing the state of the plant. However,  
the delay in the communication channel causes information loss, as it makes the state information out of date.
There is a critical delay value,
 when  the loss of information due to the communication delay perfectly compensates the implicit information carried by the triggering events. This  occurs when the  maximum delay  equals the inverse of the \emph{entropy rate} of the plant.
In this context, extensions of our previous results for event triggering strategies are presented for vector systems and are compared with the    data-rate theorem  for time-triggered control, that is extended here to a  setting with unknown delay. 
\end{abstract}

\section{Introduction}\label{sec:intro}


Internet of things  establishes a foundation for emerging of engineering systems that integrate
computing, communication, and control, these systems are know as cyber-physical systems (CPS)~\cite{kumar,carruthers2014internet}.
One key aspect of CPS is the presence of finite-rate, digital communication channels in the feedback loop. To quantify their effect on the ability to stabilize the system, \emph{data-rate theorems} have been developed
~\cite{Mitter,nair2004stabilizability}.
They essentially state that, in order to achieve stabilization, the communication rate available in the feedback loop should be at least as large as the \emph{entropy rate} of the system, corresponding to the sum of the logarithms of the unstable modes. In this way, the controller  can compensate for the expansion of the state occurring during communication 

More recent formulations of data-rate theorems include stochastic, time-varying, Markovian, erasure, additive white and
colored Gaussian, and  multiplicative noise feedback communication channels~\cite{martins2006feedback,Paolo,Lorenzo,sukhavasi2016linear,ardestanizadeh2012control,ding2016multiplicative},   formulations for nonlinear sytems~\cite{de2005n,liberzon2009nonlinear, topological}, and for systems with uncertain and variable parameters~\cite{kostina2016rate,ling2010necessary,ranade2015control,nair2002communication}. Connections with information theory are highlighted in~\cite{topological,sahai2006necessity, matveev2009estimation, massimopaolo2016,girish2013}. Extended surveys of the literature appear in~\cite{Massimo} and~\cite{Nair}.
 
Another important aspect of CPS is the need to use distributed
resources efficiently. In this context, event-triggering control
techniques~\cite{Tabuada,WPMHH-KHJ-PT:12} have emerged. These are
based on the idea of sending information in an opportunistic manner
between the controller and the plant. In this way, communication
occurs only when needed, and the primary focus  is on minimizing the
number of transmissions while guaranteeing the control
objectives. Some recent results on event-triggered implementations
in the presence of data rate constraints appear in
\cite{PT-JC:16-tac,Level,ling2016bit,pearson2016control}.
One important observation raised in~\cite{Level} is  that using    event-triggering   is possible to ``beat''  the data-rate theorem.   More precisely,   if the channel does not introduce any delay, then an event-triggering strategy can achieve stabilization for any positive rate of transmission.
This apparent contradiction is resolved by realizing that the timing of the triggering events  carries information, revealing the state of the system. When communication occurs without delay, the state can be tracked with arbitrary precision, and transmitting a single bit  at every triggering event is enough to compute the appropriate control action. 

In our previous work \cite{khojasteh2016value}, we extended the above observation to the whole spectrum of possible delay values.
Key to our analysis was the distinction between the \emph{information access rate}, that is the rate at which the controller needs to receive data, regulated by the classic data-rate theorem; and the \emph{information transmission rate}, that is the rate at which the sensor needs to send data, regulated by a given triggering control strategy. For a given triggering strategy, we showed that  for sufficiently low values of the delay, the timing information carried by the triggering events is large enough and the system can be stabilized with any positive   information transmission rate. At a critical value of the delay, the timing information carried by event triggering is not enough for stabilization and the required information transmission rate begins to grow. When the delay reaches the inverse of the entropy rate of the plant, the timing information becomes completely obsolete, and the required information transmission rate becomes larger than the information access rate imposed by the data-rate theorem. 

In the present work, we compare these results with those of a
time-triggered implementation, for which we provide a formulation of
the data-rate theorem for continuous-time systems in the presence of
delay. The comparison leads to additional insights on the value of
information in event triggering. We also extend results
in~\cite{khojasteh2016value} to vector systems. 
Proofs of the results on
event-triggering 
are omitted and can be found in~\cite{OurJournal1}. 


\subsubsection*{Notation}
Let $\real$ and $\integers$ denote  the set of real and positive integer numbers, respectively.
We denote by $\mathcal{B}(\epsilon)$ the ball centered at $0$ of
radius~$\epsilon$. We let $\log$ and $\ln$ denote the logarithm with
bases $2$ and $e$, respectively. For any function $f : \real \rightarrow
\real^n$ and $t \in \real$, we let $f(t^+)$ denote the limit from the
right, namely $\lim_{s \downarrow t} f(s)$.  We let $M_{n,m}(\real)$
be the set of $n \times m$ matrices over the field of real
numbers. Let $0_{n}$ be the all $0$ vector of size $n$. Given $A=[a_{i,j}]_{1\le i,j \le n} \in M_{n,n}(\real)$, we
let $\Tr(A)=\sum_{i=1}^{n} a_{ii}$ and $\det(A)$ denote its trace and
determinant, respectively. Note that $\det(e^A)$ is equal to
$e^{\Tr(A)}$. We let $m$ denote the Lebesgue measure on
$\mathbb{\real}^{n}$, which for $n=2$, and $n=3$ corresponds to area
and volume, respectively. Note that for $A \in M_{n,n}(\real)$ and $X
\in \real^{n}$, $m(AX)=|\det(A)|m(X)$. We let $\floor{x}$ denote the
greatest integer less than or equal to $x$. 
We let $\norm{x}$ be the  $L^2$ norm of   $x$ in $\real^n$.

\section{Problem formulation}\label{sec:setup}



\subsubsection*{System model}

We consider a networked control system composed by the
plant-sensor-channel-controller tuple depicted in Figure~\ref{fig:system}.
\begin{figure}
	\centering
 \includegraphics[scale=0.5]{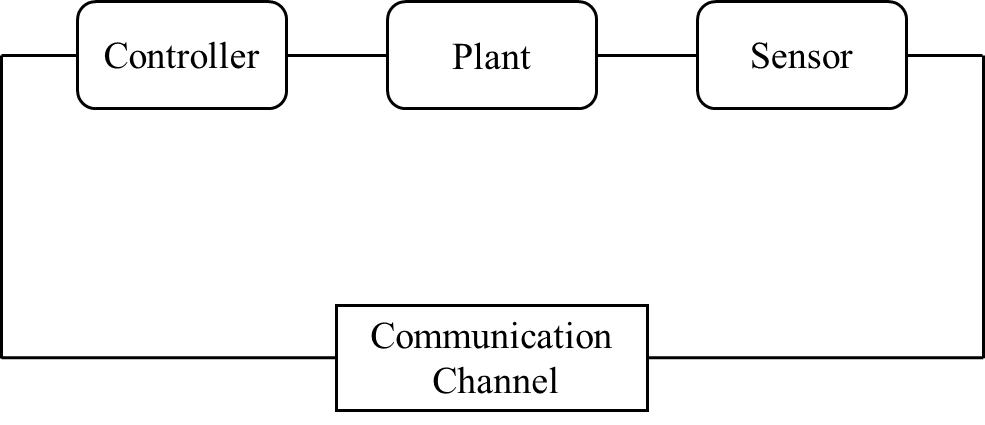}
	\caption{System model.}\label{fig:system}
\end{figure}
The plant dynamics are described by a vector, continuous-time, linear
time-invariant (LTI) system
\begin{align}\label{syscon}
  \dot{x}=Ax(t)+Bu(t),
\end{align}
where $x(t) \in \real^n$ and $u(t) \in \real^m$ for $t \in [0,\infty)$
are the plant state and control input, respectively. Here, $A \in
M_{n,n}(\mathbb{R})$, $B \in M_{n,m}(\mathbb{R})$, and
\begin{align*}
  x(0)\in \mathcal{B}(L)
\end{align*}
for some non-negative real number~$L$ ($L$ is known to both sensor and controller). Without loss of generality, we
assume that all the eigenvalues of $A$ are unstable, that is,
$\text{Re} \{\lambda_i\}>0$ for $i \in \{1,\ldots,n\}$. 
The sensor can measure the state
of the system exactly, and the controller can apply the control input
to the plant with infinite precision and without delay. However,
sensor and controller communicate through a channel that can support
only a finite data rate and is subject to delay, as we describe next.

  
\subsubsection*{Triggering Times and Communication
  Delay}\label{subsec:Delay}
We denote by $\{ t_s^k \}_{k \in \integers}$ the sequence of
times at which the sensor transmits a packet composed of
$g(t_s^k)$ bits representing the system state to the controller.
We
define the $k^{th}$ \emph{triggering interval} by
\begin{align}\label{gammma111}
  \Delta'_k = t_s^{k+1}-t_s^{k}.
\end{align}
We let $t_c^k $ be the time at
which the controller receives and decodes a packet of data which was encoded and transmitted at time $t_s^k$ for $k \in \integers$. We
assume a uniform upper bound, known to the sensor and the controller,
on the \emph{communication delays} 
\begin{align}\label{gammma}
  \Delta_k= t_c^k-t_s^k \leq \gamma.  
\end{align}
When
referring to a generic triggering time or reception time, we shall
skip the superscript $k$ in $t_s^k$ and $t_c^k$. 

\subsubsection*{Time-Triggered and Event-Triggered Control}

With the information received from the sensor, the controller
maintains an estimate $\hat{x}$ of the plant state, which during the inter-reception times evolves
according to
\begin{align}\label{sysest}
  \dot{\hat{x}}(t)=A\hat{x}(t)+Bu(t),
\end{align}
starting from $\hat{x}(t_c^{k+})$.

The \emph{state estimation error} is then
\begin{align}\label{eq:ese}
  z(t)=x(t)-\hat{x}(t),
\end{align}
 where $z(0)=x(0)-\hat{x}(0)$. Without updated information from the sensor, this
error grows, and the system can potentially become unstable.
The sensor should therefore select the sequence of transmission times
$\{ t_s^k \}_{k \in \integers}$ and the packet sizes $\{g(t^k_s)\}_{k \in \integers}$ in a way that ensures stabilizability and observability, while satisfying the rate constraints imposed by the channel.  

The asymptotic notions of stabilizability and observability that we require are standard, and are formally defined in \cite{Mitter,khojasteh2016value}.
To ensure these asymptotic properties, we consider two different approaches:
event-triggered and time-triggered control.
In an
event-triggering implementation, we   define a triggering
function $v(t)$
%
%
that is known to both the
controller and the sensor. Whenever the state estimation error crosses the value of this function, a transmission occurs. In a time-triggered implementation, transmissions are not state dependent. 
\subsubsection*{Information Access Rate}
We let $b_c(t)$ denote the number of bits that have
been received by the controller up to time $t$.
We define the \emph{information access rate}
\begin{align}
  R_c = \limsup_{t \rightarrow \infty} \frac{b_c(t)}{t}.
\label{infaccessrate}
\end{align}

In this setting, data-rate theorems
describe the trade-off between the information access rate and the
ability to stabilize the system. 
They are generally stated for
discrete-time systems, albeit similar arguments hold in continuous
time as well, see e.g.~\cite{hespanha2002towards}. They are based on the
fundamental observation that there is an inherent entropy rate
\begin{align}
  h(A)=\frac{\Tr(A)}{\ln 2} = \frac{\sum_{i=1}^d \lambda_i}{\ln 2}
\label{eq:inh}
\end{align}
at which the plant generates information. It follows 
that to guarantee stability it is necessary for the controller to have
access to state information at a rate
\begin{align}\label{Datarate}
  R_c > h(A).
\end{align}
This result indicates what is required by the controller, and it does
not depend on the feedback structure --- including aspects such as
communication delays, information pattern at the sensor and the
controller, and whether the times at which transmissions occur are
state dependent, as in event-triggered control, or not, as in
time-triggered control.

\subsubsection*{Information Transmission Rate}

We now take the viewpoint of the sensor when examining the amount of
information that it needs to transmit to the controller.  We make the following two observations. First,
in the presence of communication delays, the state estimate received
by the controller might be out of date, so that the sensor might need
to send data at a \emph{higher rate} than what~\eqref{Datarate}
prescribes to make-up for such discrepancy.  Second, in the case of
event-triggered transmissions, the timing of the triggering events
itself carries some information. For instance, if the communication
channel does not introduce any delay, then a triggering event may
reveal the state of the system very precisely, and effectively carry
an unbounded amount of information. The controller may then be able to
stabilize the system even if the sensor uses the channel very
sparingly, transmitting at a \emph{smaller rate} than
what~\eqref{Datarate} prescribes.

Motivated by these observations, let $b_s(t)$ be the number of bits
transmitted by the sensor up to time $t$, and define the
\emph{information transmission rate} by
\begin{align}\label{Tx-rate}
  R_s = \limsup_{t \rightarrow \infty}\frac{b_s(t)}{t}.
\end{align}
Since at every triggering time the sensor sends $g(t_s^k)$ bits, we also have
\begin{align}\label{rss}
  R_s = \limsup_{N \rightarrow \infty} \frac{\sum_{k=1}^{N} g(t_s^k)}{\sum_{k=1}^{N}
    \Delta'_k}.
\end{align}

\section{Necessary condition on the access rate for event triggering}\label{sec:Necessary} 

We now quantify the amount of information that the
controller needs to have access to in order to 
have exponential convergence of the estimation error and the
plant state to zero, irrespective of  the feedback
structure used by the sensor to decide when to transmit.  The proof  follows, with minor modifications to the one for the scalar case, cf.~\cite{OurJournal1}. 

\begin{theorem}\label{thm:necc-access-rate}
Consider the plant-sensor-channel-controller model described in
  Section~\ref{sec:setup} with plant dynamics~\eqref{syscon}, and state estimation error $z (t)$.  Let $\sigma \in \real$ be
  positive.
  \begin{enumerate}
  \item If the state estimation error satisfies 
  %
  %
    \begin{align*}
      \norm{z(t)}\le \norm{z(0)} ~e^{-\sigma t},
    \end{align*}
    then
    \begin{align}\label{necz}
      b_c(t) \ge t~\frac{\Tr(A)+n\sigma}{\ln 2}+n~ \log
      \frac{L}{\norm{z(0)}}.
    \end{align}

  \item If the system   is stabilizable and
    \begin{align*}\label{necx}
      \norm{x(t)} \le \norm{x(0)} ~e^{-\sigma t},
    \end{align*}
    then
    \begin{align}
      b_c(t) \ge t ~\frac{\Tr(A)+n\sigma}{\ln 2}.
    \end{align}
  \end{enumerate}
  In both cases, the information access rate is
  \begin{align}
    R_c > \frac{\Tr(A)+n\sigma}{\ln 2}.
    \label{necrc}
  \end{align}  
\end{theorem}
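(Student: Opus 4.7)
The plan is to use a volume-counting argument in the style of classical data-rate theorems, adapted to the continuous-time vector setting. The geometric crux is that both the error dynamics $\dot z=Az$ and the plant dynamics (for a fixed control signal) induce maps on initial conditions whose Jacobians equal $e^{At}$; hence, by the identity $m(AX)=|\det(A)|\,m(X)$ stated in the Notation section and $\det(e^{At})=e^{\Tr(A)t}$, any measurable set of initial conditions dilates by a factor $e^{\Tr(A)t}$ over time $t$. Against this expansion, I will pit the contraction forced by the hypothesis on $\|z(t)\|$ (or $\|x(t)\|$), and pay for the mismatch with bits.

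For part (1), I would first subtract \eqref{sysest} from \eqref{syscon} to get $\dot z=Az$, so $z(t)=e^{At}z(0)$. Since $x(0)\in\mathcal{B}(L)$ and $\hat x(0)$ is deterministic and known to sensor and controller, $z(0)$ lies in a translate of $\mathcal{B}(L)$ with Lebesgue measure $V_n L^n$, where $V_n=m(\mathcal{B}(1))$. After $t$ seconds, without any transmissions, the reachable set of error values has measure $e^{\Tr(A)t}V_n L^n$. Given $b_c(t)$ received bits, every output of the controller's decoder is a deterministic function of these bits, so the controller can resolve at most $2^{b_c(t)}$ distinct values of $\hat x(t)$. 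This partitions the set of possible $z(0)$ into at most $2^{b_c(t)}$ equivalence classes, and the hypothesis $\|z(t)\|\le\|z(0)\|e^{-\sigma t}$ forces the image (under $e^{At}$) of each class to fit inside $\mathcal{B}(\|z(0)\|e^{-\sigma t})$, of measure $V_n(\|z(0)\|e^{-\sigma t})^n$. Comparing the total image measure to its pre-image measure yields
\begin{align*}
e^{\Tr(A)t}\,V_n L^n\le 2^{b_c(t)}\,V_n\bigl(\|z(0)\|e^{-\sigma t}\bigr)^n,
\end{align*}
and taking $\log_2$ rearranges into \eqref{necz}.

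For part (2), the argument is analogous but phrased on $x$. For each fixed received bit sequence, $u(\cdot)$ is deterministic, so $x(t)=e^{At}x(0)+\int_0^t e^{A(t-s)}Bu(s)\,ds$ is an affine function of $x(0)$ with Jacobian $e^{At}$. The same partition of $\mathcal{B}(L)$ into at most $2^{b_c(t)}$ equivalence classes applies, and, using $\|x(0)\|\le L$, the hypothesis puts each image inside $\mathcal{B}(Le^{-\sigma t})$. Stabilizability is invoked to guarantee that such a controller exists and that the identification of the message sequence with a unique control trajectory is well-defined. The same measure comparison, after the $L^n$ factors cancel, gives the stated bound on $b_c(t)$. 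Finally, in both cases, dividing by $t$ and taking $\limsup_{t\to\infty}$ kills the additive initial-condition term $n\log(L/\|z(0)\|)$; strict inequality \eqref{necrc} follows by applying the same bound to any $\sigma'>\sigma$ that still satisfies the hypothesis, or alternatively by noting that the bound must hold for all $t$ and so any slack in $b_c(t)$ relative to the linear term precludes equality in the limit.

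The main obstacle I foresee is a careful bookkeeping in Part 1, where the radius $\|z(0)\|e^{-\sigma t}$ bounding each equivalence-class image depends on $z(0)$ itself; within a given equivalence class one must take the supremum of $\|z(0)\|$ and verify that the class image is covered by the resulting ball, then sum over classes. A subtler point is justifying that ``at most $2^{b_c(t)}$ distinguishable controller estimates'' is the right count even though the bit stream arrives progressively and the decoder is causal — this requires unfolding causality and observing that any prefix-determined decoder output is still a function of at most $2^{b_c(t)}$ inputs. Everything else reduces to the determinant identity and the volume inequality.
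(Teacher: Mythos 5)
Your proposal is correct and takes essentially the same route as the paper: part (1) is the same volume comparison between the uncertainty set of measure $e^{\Tr(A)t}k_nL^n$ and $2^{b_c(t)}$ balls of radius $\norm{z(0)}e^{-\sigma t}$, and part (2) is the paper's counting of the sets $\Pi_{\{u(\tau)\}}$ of initial conditions compatible with each control trajectory, which are exactly your decoder-induced equivalence classes pulled back through the affine map with Jacobian $e^{At}$. The bookkeeping subtleties you flag (the $z(0)$-dependent radius, the causal decoder, and the strictness of \eqref{necrc}) are glossed over in the paper's proof as well, so they do not distinguish your argument from the original.
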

\vspace{2mm}
\begin{proof}
  Note that~\eqref{necrc} immediately follows by dividing~\eqref{necz}
  and~\eqref{necx} by $t$ and taking the limit for $t \rightarrow
  \infty$. Regarding (i), let us write the solution to~\eqref{syscon}
  as
  \begin{small}
  \begin{align}\label{solutiondif}
    x(t) = e^{At}x(0)+\alpha(t), \quad \alpha(t)=e^{At} \int_{0}^{t}
    {e^{-A\tau}Bu(\tau)d\tau}.
  \end{align}  
  \end{small}
  We then define,
  \begin{align}\label{Arxicgammat}
    \Gamma_t= \{x(t): x(t)=e^{At}x(0)+\alpha(t)~; ~\norm{x(0)} \le
    L \},
  \end{align}
  that is a set which represents the uncertainty at time $t$ given the
  bound $L$ on the norm of the initial condition $x(0)$ and
  $\alpha(t)$. The state of the system can be any point in this
  uncertainty set.  We can find a lower bound on $b_c(t)$ by counting
  the number of balls of radius $\epsilon(t)$, that cover $\Gamma_t$,
  where $\epsilon(t)=\norm{z(0)} ~e^{- \sigma t}$. The Lebesgue
  measure of a sphere of radius $\epsilon$ in $\mathbb{R}^n$ is $k_n
  \epsilon ^n$ where $k_n$ is a constant that changes with dimension.
  Therefore $b_c(t)$, the number of bits of information that the
  controller must have access to by time $t$, should satisfy
  \begin{align*}
    b_c(t)&\ge \log \frac{m(\Gamma_t)}{ m(\mathcal{B} (\epsilon(t)))}
    \\
    &= \log \frac{|\det ((e^A)^t)|m (\norm{x(0)} \le L)}{k_d
      \norm{z(0)}^n ~e^{- n\sigma t}}
    \\
    &=t ~\log |\det (e^A) e^{n\sigma}| + \log \frac{L^n}{\norm{z(0)}^n} \\
    &=t~\log |e^ {\Tr(A)+n\sigma}|+n \log \frac{L}{\norm{z(0)}}.
  \end{align*}
   With access to $b_c(t)$ bits of information, the controller can at
  best identify $x(t)$ up to a ball of radius
  $\epsilon(t)$.  Consequently, (i) follows.
  
  Recall that $\norm{x(0)} \le L$. For any given control trajectory
  $\{u(\tau)\}_{\tau=0}^{\tau=t}$ define
  \begin{align*}
    \Pi_{\{u(\tau)\}_{\tau=0}^{\tau=t}}=\{x(0) :
    \norm{x(t)}<\epsilon(t)\},
  \end{align*}
  where $\epsilon(t)=\norm{x(0)}~ e^{- \sigma t}$. These are the
  sets of all initial conditions for which by choosing the control
  trajectory $\{u(\tau)\}_{\tau=0}^{\tau=t}$, the plant state at time
  $t$, $x(t)$, will be in a ball of radius $\epsilon(t)$.
  $x(t)$ depends linearly on $\{u(\tau)\}_{\tau=0}^{\tau=t}$. As
  a consequence, all of the sets
  $\Pi_{\{u(\tau)\}_{\tau=0}^{\tau=t}}$, are linear transformation of
  each other. The measure of
  $\Pi_{\{u(\tau)=0\}_{\tau \in [0,t]}}$ is 
  \begin{align*}
  |\det(e^{-At})k_n \norm{x(0)}^n e^{-n\sigma t}|=k_n
  \norm{x(0)}^n e^{-(\Tr(A)+n\sigma)t},
  \end{align*}
  which is upper bounded by $k_n
 L^n e^{-(\Tr(A)+n\sigma)t}$. We can then determine a
  lower bound for $b_c(t)$ by counting the number of $\Pi$ sets (for
  different control trajectories $\{u(\tau)\}_{\tau=0}^{\tau=t}$)
  which takes to cover the ball $\norm{x(0)} \le L$.

  Thus, the controller must have access to at least $b_c(t)$ bits by
  time $t$, where
  \begin{align*}
    b_c(t)  &\ge  \log\frac{m(\norm{x(0)}\le L)}{m(\Pi)}\\
    &=  \log\frac{k_n L^n}{k_n \norm{x(0)}^n e^{-(\Tr(A)+n\sigma)t}} \\
    &\ge t~\frac{\Tr(A)+n\sigma}{\ln 2}+n ~\log\frac{L}{L},
  \end{align*}
  and this proves (ii).
\end{proof}
\begin{remark} {\rm Theorem~\ref{thm:necc-access-rate} is
    valid for any control scheme, and the controller does not
    necessarily have to compute the state estimate
    following~\eqref{sysest}. This theorem can be viewed as an extension of the data-rate theorem with exponential convergence guarantees. It states
    that, to have exponential convergence of the estimation error and
    the state, the access rate should be larger than the estimation
    entropy, the latter concept having been recently introduced
    in~\cite{liberzon2016entropy}. A similar
    result  for continuous-time
    systems appears in~\cite{PT-JC:16-tac}, but only for linear
    feedback controllers.  The classic formula of the data-rate
    theorem~\eqref{Datarate}, given in 
    \cite{Mitter,nair2004stabilizability} can be derived as a special case of
    Theorem~\ref{thm:necc-access-rate} by taking $\sigma \rightarrow 0$
    and using continuity. 
  }
  \oprocend
\end{remark}

\section{Necessary condition on the transmission rate for time triggering}

 
We now derive a data-rate theorem for the information transmission rate in two different time-triggered scenarios and  in the presence of unknown communication delays.

In the first scenario, we assume the following time-triggered
implementation:
the sensor transmits at all times $\{t_s^k\}_{k\in \mathbb{N}}$,   where
\begin{align}
\label{TTR2}
t_s^k=kT,
\end{align}
and $T$ denotes the transmission period. 
Note that in this setting,  
the sensor
transmits without considering whether the previous packets have been
received and decoded or not. Consequently, the communication delay is upper bounded as~\eqref{gammma} only when there is not another packet in the communication channel. In this setting, we have the following theorem.
\begin{theorem}\label{thm:necc-TT}
Consider the plant-sensor-channel-controller model described in
  Section~\ref{sec:setup} with plant dynamics~\eqref{syscon}. 
Assume that  the communication delays upper bounded as~\eqref{gammma}
when there is no other packet in the channel, and assuming that the packets are received and decoded by the controller in the
order they are transmitted by the sensor.
Then,  there exists a delay realization 
$\{ \Delta_k
  \}_{k \in \integers}$ such that a rate
\begin{align*}
R_s > \left\{
                \begin{array}{ll}
                  \dfrac{\Tr(A)}{\ln 2}  & \mbox{ if } \gamma < T,\\ \\
                  \dfrac{\Tr(A)\frac{\gamma}{T}}{ \ln 2}& \mbox{ if }  \gamma \ge T. \ \\
                \end{array}
              \right.\\
\end{align*}
is necessary for
asymptotic observability and asymptotic stabilizability.
\end{theorem}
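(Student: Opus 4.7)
The strategy is to reduce both regimes to the necessary condition on the information access rate established in Theorem~\ref{thm:necc-access-rate}. Taking $\sigma \to 0^+$ in that theorem, asymptotic stabilizability (or asymptotic observability) forces $R_c > \Tr(A)/\ln 2$. It then remains to relate $R_s$ to $R_c$ through the channel model: because the transmission times $t_s^k = kT$ are state-independent, no implicit timing information is transferred to the controller, so the access rate cannot exceed the transmission rate, and in the presence of queueing it may be strictly smaller.

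\textbf{Case $\gamma < T$.} Here no two packets overlap in the channel, since each takes at most $\gamma$ to deliver and the next one is released only after a further $T > \gamma$. Thus $\Delta_k \le \gamma$ holds for every packet, and by time $t$ the controller has received every packet issued by time $t - \gamma$, and no more than those issued by time $t$:
\begin{align*}
b_s(t-\gamma) \;\le\; b_c(t) \;\le\; b_s(t).
\end{align*}
Dividing by $t$ and letting $t \to \infty$ gives $R_c = R_s$, whence $R_s > \Tr(A)/\ln 2$.

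\textbf{Case $\gamma \ge T$.} I construct a worst-case delay realization exploiting queueing in the channel, which is admissible because the assumption that $\Delta_k \le \gamma$ binds only when no other packet is in transit. With packets released every $T$ seconds and each consuming the full $\gamma$ seconds of channel time (and received in order), they accumulate; solving the resulting recursion for the reception times yields $t_c^k = T + k\gamma$. Hence by time $t$ the controller has received exactly $K(t) = \lfloor (t-T)/\gamma \rfloor$ packets, so $b_c(t) = b_s(K(t)\,T)$. Setting $s(t) := K(t)\,T$, the rescaling ratio $s(t)/t$ converges to $T/\gamma$, so limsup arithmetic yields
\begin{align*}
R_c \;=\; \limsup_{t\to\infty} \frac{b_s(s(t))}{s(t)} \cdot \frac{s(t)}{t} \;\le\; \frac{T}{\gamma}\, R_s.
\end{align*}
Rearranging and applying Theorem~\ref{thm:necc-access-rate} gives $R_s > (\gamma/T)\,\Tr(A)/\ln 2$.

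\textbf{Main obstacle.} The principal delicate step is the limsup manipulation in Case~2, where one must use that the time-rescaling factor $s(t)/t$ genuinely \emph{converges} (not merely has bounded limsup) so that it can be pulled outside the outer limsup; this is what links the $\limsup$ definitions of $R_c$ and $R_s$ in the correct direction. A secondary subtlety is to verify that the constructed queueing realization lies within the channel model and is indeed extremal, in that any smaller per-packet delay would shorten the queue and thereby admit a smaller $R_s$ for the same $R_c$.
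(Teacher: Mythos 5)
Your proof is correct, but it takes a genuinely different route from the paper's. The paper proves the bound from scratch by a volume-counting argument: it introduces a fictitious delay-free observer, shows that the Lebesgue measure of its uncertainty set grows by $e^{\Tr(A)T}$ per period and shrinks by at most $2^{-g(t_s^k)}$ per packet, and then charges the controller an extra growth factor $e^{\Tr(A)\eta(\gamma-T)}$ for the queueing backlog when $\gamma \ge T$; requiring the resulting product not to diverge yields the rate condition directly. You instead factor the argument through the access-rate bound $R_c > \Tr(A)/\ln 2$ and prove a purely channel-theoretic inequality $R_c \le \min\{1,\,T/\gamma\}\,R_s$ for the worst-case queueing realization $t_c^k = T + k\gamma$. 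The two worst-case delay realizations are the same (the paper's ``packets sent in $[0,\eta T)$ arrive by $\eta\gamma$'' is exactly your $t_c^k \approx k\gamma$), so the content coincides, but your modularization is cleaner: the plant dynamics enter only through the data-rate theorem, and the factor $\gamma/T$ is isolated as a statement about the channel alone. The price is that you must be careful with the limsup rescaling, which you correctly identify and handle (the convergence $s(t)/t \to T/\gamma$ is what lets the factor be pulled out).

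One point to tighten: you invoke Theorem~\ref{thm:necc-access-rate} with $\sigma \to 0^+$, but that theorem's hypothesis is exponential convergence at rate $\sigma$, whereas Theorem~\ref{thm:necc-TT} assumes only \emph{asymptotic} observability/stabilizability, which does not guarantee exponential decay at any fixed positive rate. You should instead cite the classic data-rate theorem~\eqref{Datarate}, $R_c > h(A) = \Tr(A)/\ln 2$, which the paper states as the necessary access-rate condition for asymptotic stabilization (and which the same covering argument delivers with an arbitrary vanishing radius $\epsilon(t)$ in place of $\norm{z(0)}e^{-\sigma t}$). This is a presentational fix rather than a gap, since the paper's own Remark~1 identifies the two statements in the limit, but as written the logical dependence runs through a theorem whose hypothesis is strictly stronger than the one you are given.
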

\begin{proof}
  Consider  an observer that can receive the packets transmitted  by the sensor without any delay, and  that has the same knowledge about the system as the controller. 
Let $\zeta_{t}^o$ and $\zeta_{t}^c$   be the uncertainty sets for the state $x(t)$, at the observer and controller, respectively. We have  $\zeta_{0}^o=\zeta_{0}^c$.

We write the solution to~\eqref{syscon}
  as~\eqref{solutiondif}.
  Consequently, we have
   \begin{align*}
    m(\zeta_{t_s^{(k+1)-}}^o)=e^{\Tr(A) T}m(\zeta_{t_s^{k}}^o).
    \end{align*}
 Since the observer receives packets without delay,  we have
\begin{align*}
  m(\zeta_{t_s^{k+1}}^o) \ge \frac{1}{2^{g(t_s^k)}}
  m(\zeta_{t_s^{(k+1)-}}^o) =\frac{1}{2^{g(t_s^k)}} e^{\Tr(A)
    T}m(\zeta_{t_s^{k}}^o).
\end{align*}
By iterating from $k=1$ to $k=\eta$, we have
\begin{align*}
  m(\zeta_{t_s^{\eta-}}^o) \ge
  \dfrac{1}{2^{\sum_{k=1}^{k=\eta-1}g(t_s^k)}}e^{\Tr(A)\eta T}
  m(\zeta_{0}^o).
\end{align*}
However, the controller does not necessarily receive packets
immediately. Indeed, in the worst case,  if $\gamma > T$ the controller receives packets that have been sent in the time interval $[0,\eta T)$  
 by the time
$\eta T+\eta(\gamma-T)=\eta \gamma$.
While, for $T > \gamma$ we have
\begin{align}\label{LALALAND}
  m(\zeta_{t_c^{\eta-}}^c) \ge m(\zeta_{t_s^{\eta-}}^o),
\end{align}
for $T \le \gamma$ we have
\begin{align}\label{LALALAND2}
  \sup_{ \{ \Delta_k \} \leq \gamma } m(\zeta_{t_c^{\eta-}}^c) \ge m(\zeta_{t_s^{\eta-}}^o)e^{\Tr(A)\eta(\gamma-T)}.
\end{align}
It follows that the right-hand side of~\eqref{LALALAND} and~\eqref{LALALAND2} tends to
infinity as $\eta \rightarrow \infty$, making it impossible to stabilize   or
track the state, if
 \begin{align*}
    \infty &= \lim_{\eta \rightarrow \infty}
    \dfrac{1}{2^{\sum_{k=1}^{k=\eta-1}g(t_s^k)}}e^{\Tr(A)\eta T}\\\nonumber
    &=\lim_{\eta \rightarrow \infty}\exp\left\{T \eta  \left(\Tr(A) -\ln 2\frac{\sum_{k=1}^{k=\eta-1}g(t_s^k)}{T\eta}\right)\right\}
      \end{align*}
    for $T>\gamma$,
    and 
    \begin{align*}
    \infty &= \lim_{\eta \rightarrow \infty}
    \dfrac{1}{2^{\sum_{k=1}^{k=\eta-1}g(t_s^k)}}e^{\Tr(A)\eta \gamma}
    \\\nonumber
    &=\lim_{\eta \rightarrow \infty}\exp\left\{T \eta  \left(\Tr(A)\frac{\gamma}{T} -\ln 2\frac{\sum_{k=1}^{k=\eta-1}g(t_s^k)}{T\eta}\right)\right\}
      \end{align*}
      for $T<\gamma$.
The result now follows.
\end{proof}
\begin{remark}
Theorem~\ref{thm:necc-TT} provides a data-rate theorem for the information transmission rate without imposing  exponential convergence guarantees. It shows the existence of    a critical delay value $\gamma =T$, at which the rate begins to increase linearly with the delay. 
\oprocend
\end{remark}

We next
consider a different time-triggered scenario. Let
\begin{align}
  \label{TTR}
  t_s^0 = 0, \quad t_s^{k+1}=t_s^{k}+(\lfloor \Delta_k/T
  \rfloor+1)T,
\end{align}
where $T$ is a fixed non-negative real number. In this case, the sensor
transmits only at integer multiples of the period $T$, after the
previous packet is received. It follows that there is no delay accumulation, and for all packets the delay satisfies \eqref{gammma}. 
In this setting, we have the following result for 
exponential convergence of the
estimation error to zero.
\begin{theorem}\label{thm:necc-TT1111}
Consider the plant-sensor-channel-controller model described in
  Section~\ref{sec:setup} with plant dynamics~\eqref{syscon}, and state estimation error $z (t)$.  Let $\sigma \in \real$ be
  positive.
  If using the time-triggered implementation \eqref{TTR} the state estimation error satisfies
    \begin{align}\label{tt-5d}
    \norm{z(t_s^k)}\le \norm{z(0)} ~e^{-\sigma t_s^k},
  \end{align} 
for all $k\in \integers$,  then there exists a delay realization $\{ \Delta_k
  \}_{k \in \integers}$ which requires
  \begin{align}\label{TTnESGMTRH}
        R_s \ge \frac{(\Tr(A)+n\sigma)(\lfloor \frac{\gamma}{T} \rfloor+1)}{ \ln 2}.  \end{align}
\end{theorem}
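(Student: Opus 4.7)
The plan is to exhibit an adversarial delay realization and then run a volume-counting argument, in the spirit of the proof of Theorem~\ref{thm:necc-access-rate}, to lower bound the bits the sensor must transmit in each triggering interval.

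First, I would fix $\Delta_k = \gamma$ for every $k \in \integers$: among all admissible delay sequences, this maximizes the inter-transmission interval $\Delta'_k$ under protocol~\eqref{TTR}. The triggering intervals become constant, $\Delta'_k = (\lfloor \gamma/T \rfloor + 1)T =: \tau$, and the transmission times are $t_s^k = k\tau$. Note that $\Delta_k \le \Delta'_k$ holds trivially, so packet $k$ is received at $t_c^k = t_s^k + \gamma \le t_s^{k+1}$ and no queueing occurs. This means that between $t_s^k$ and $t_s^{k+1}$ the only new information the controller can acquire about the state is encoded in the single packet sent at $t_s^k$.

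Second, I would track the Lebesgue measure $m_k$ of the controller's uncertainty set about $x(t_s^k)$ given all information received by time $t_s^k$. Between consecutive sampling instants, the open-loop dynamics stretch this set by $|\det(e^{A\tau})| = e^{\Tr(A)\tau}$, while the $g(t_s^k)$ bits of packet $k$ can at best contract it by $2^{g(t_s^k)}$. This yields the recursion $m_{k+1} \ge m_k e^{\Tr(A)\tau}/2^{g(t_s^k)}$. Hypothesis~\eqref{tt-5d} then forces the uncertainty set at time $t_s^k$ to fit inside a ball of radius $\|z(0)\| e^{-\sigma t_s^k}$ centered at $\hat{x}(t_s^k)$, for otherwise some admissible trajectory would violate the exponential bound; hence $m_k \le k_n \|z(0)\|^n e^{-n\sigma t_s^k}$. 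Telescoping the recursion and combining with this upper bound yields, after absorbing an additive constant, a per-interval bit budget $g(t_s^k) \ge (\Tr(A) + n\sigma)\tau/\ln 2 = (\Tr(A) + n\sigma)(\lfloor \gamma/T \rfloor + 1)T/\ln 2$ on average. Normalizing by the fundamental channel slot length $T$, the peak transmission rate the sensor must sustain in each active slot is exactly~\eqref{TTnESGMTRH}.

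The main obstacle is the final accounting step, relating the per-packet bit requirement to the transmission rate $R_s$ in~\eqref{rss}. A direct bits-per-unit-time average would give only $R_s \ge (\Tr(A) + n\sigma)/\ln 2$; the factor $(\lfloor \gamma/T \rfloor + 1)$ in~\eqref{TTnESGMTRH} arises because, under the adversarial delay, the sensor is silent for $\lfloor \gamma/T \rfloor$ slots of length $T$ between successive packets, so the peak rate supported in each active slot must compensate for the information that would otherwise have been conveyed during those idle slots. Making this reasoning rigorous requires being careful that the exponential bound is assumed only at the discrete times $t_s^k$, so the telescoping cannot be replaced by a pointwise application of Theorem~\ref{thm:necc-access-rate}.
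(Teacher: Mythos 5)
There is a genuine gap, and it lies exactly where you flagged it: the ``final accounting step.'' Your choice of adversarial realization is the wrong one. Under $\Delta_k=\gamma$ for all $k$, the per-packet bit requirement and the inter-transmission time \emph{both} scale with $\tau=(\lfloor\gamma/T\rfloor+1)T$, so they cancel in~\eqref{rss} and you obtain only $R_s\ge(\Tr(A)+n\sigma)/\ln 2$ --- as you yourself observe. Your attempted rescue, reinterpreting~\eqref{TTnESGMTRH} as a ``peak rate per active slot,'' is not consistent with the paper's definition of $R_s$: by~\eqref{Tx-rate} and~\eqref{rss}, $R_s$ is the long-run time average $\limsup_t b_s(t)/t$, and the idle slots between transmissions count fully in the denominator. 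So the statement as written is not established by your argument.

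The missing idea is to decouple the realization that forces the packet sizes from the realization on which the rate is evaluated. Because the sensor has no fore-knowledge of $\Delta_k$ at the moment it transmits, every packet must be sized to keep~\eqref{tt-5d} valid even if the delay turns out to be $\gamma$, i.e.\ even if the next transmission is pushed out to $t_s^k+(\lfloor\gamma/T\rfloor+1)T$; the volume argument (which you set up correctly) then forces
\begin{align*}
\sum_{k=0}^{\eta-1} g(t_s^k) \;\ge\; \frac{(\Tr(A)+n\sigma)\,\eta\,(\lfloor\gamma/T\rfloor+1)T}{\ln 2} + O(1),
\end{align*}
regardless of what the delays actually are. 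The adversary then realizes $\Delta_k=0$ for all $k$, under which~\eqref{TTR} gives $t_s^{k+1}=t_s^k+T$, so these worst-case-sized packets are emitted every $T$ seconds and the elapsed time after $\eta$ packets is only $\eta T$. Dividing the displayed bound by $\eta T$ and letting $\eta\to\infty$ yields~\eqref{TTnESGMTRH} with $R_s$ understood as the genuine time average. With this substitution --- packets sized for $\Delta_k=\gamma$, rate evaluated at $\Delta_k=0$ --- the rest of your volume-counting argument goes through essentially as in the paper.
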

\begin{proof}
Using~\eqref{solutiondif}
 we know the state of the system can be any point in $\Gamma_t$, cf.~\eqref{Arxicgammat}, then, we have
  \begin{align*}
 m(\Gamma_{t_c^{k}})=e^{\Tr(A)\Delta_k}m(\Gamma_{t_s^k}),
  \end{align*}
  and
  \begin{align*}
    m(\Gamma_{t_s^{k+1}}) \ge m(\Gamma_{t_c^k})e^{\Tr(A)\left((\lfloor
        \frac{\Delta_k}{T} \rfloor+1)T-\Delta_k\right)}.
  \end{align*}
%
Iterating from $k=0$ to $k=\eta$, we have
  \begin{align*}
    m(\Gamma_{t_s^{\eta-}}) \ge
    e^{\sum_{k=0}^{k=\eta-1} \Tr(A) (\lfloor
      \frac{\Delta_k}{T} \rfloor+1)T} m(\Gamma_{0})=e^{\Tr(A)t_s^{\eta}}m(\Gamma_{0}).
  \end{align*}
%
%
 We can now obtain a lower bound on $\sum_{k=0}^{k=\eta-1}g(t_s^k)$ by counting
  the number of balls of radius $\norm{z(0)} ~e^{-\sigma t_s^\eta}$, that cover $\Gamma_{t_s^{\eta-}}$. Recall that the Lebesgue measure of a sphere of radius $r$ in $\mathbb{R}^n$ is $k_n  r^n$ where $k_n$ is a constant that depends on the dimension. We have
  \begin{align*}
 \sum_{k=0}^{k=\eta-1}g(t_s^k) &\ge \log\frac{e^{\Tr(A)t_s^\eta}m(\Gamma_{0})}{k_n \norm{z(0)}^n ~e^{-n\sigma t_s^\eta}}\\\nonumber
  &=\log\frac{e^{(\Tr(A)+n\sigma)t_s^\eta}m(\Gamma_{0})}{k_n \norm{z(0)}^n}.
  \end{align*}
  Hence, 
    \begin{align*}
   \sum_{k=0}^{k=\eta-1}g(t_s^k) \geq \log\frac{e^{(\Tr(A)+n\sigma)\eta(\lfloor
            \frac{\gamma}{T} \rfloor+1)T}m(\Gamma_{0})}{k_n \norm{z(0)}^n},
    \end{align*}
 because the sensor, not having any fore-knowledge of the delay, must send at least the number of bits required when $\Delta_k=\gamma$ for all $k\in \integers$, to ensure that 
 \eqref{tt-5d}   holds.
 However, the actual realization of the delay may be $\Delta_k=0$ for all $k \in \integers$, so that we have
\begin{align*}
          R_s \ge \lim_{\eta \rightarrow \infty}\frac{1}{\eta T} \log\dfrac{e^{(\Tr(A)+n\sigma)\eta(\lfloor
                    \frac{\gamma}{T} \rfloor+1)T}m(\Gamma_{0})}{k_n \norm{z(0)}^n},
    \end{align*}
    and the result follows.
\end{proof}
\begin{remark}
In the time-triggered setting governed by~\eqref{TTR2}, a packet is  transmitted without considering whether the previous packets have been
received and decoded.  On the other hand, in the time-triggered setting governed by~\eqref{TTR} a packet is transmitted only after the previous packet is received.  
Letting $\sigma \rightarrow 0$, for $\gamma < T$ both Theorems~\ref{thm:necc-TT} and~\ref{thm:necc-TT1111} reduce to $R_s \ge  \Tr(A)/\ln 2$. Namely, for low values of the delay, and without imposing exponential convergence guarantees, we recover the critical value of the data-rate theorem for the access rate in Theorem~\ref{thm:necc-access-rate}. 
\oprocend
\end{remark}

\begin{remark} {\rm Theorem~\ref{thm:necc-TT} and \ref{thm:necc-TT1111} are
    valid for any control scheme, and the controller does not
    necessarily have to compute the state estimate as~\eqref{sysest}. These theorems can be seen as an extension of the data-rate theorem for the information transmission rate for time-triggered control, with unknown bounded delay.} 
  \oprocend
\end{remark}

\section{Necessary and sufficient conditions on the transmission rate for  event-triggering}\label{sec:HigherDimensions}


\subsection{Component-wise description}\label{HDprelim}

In the proposed event-triggered design, we deal with each coordinate of the system separately. This corresponds to treating the $n$-dimensional system as $n$ scalar, coupled systems. When a triggering occurs for one of the coordinates, the controller should be aware  of which coordinate of the system the received packet corresponds to. Accordingly, we assume that there are $n$ parallel finite-rate digital communication channels between each coordinate of the system and the controller, each subject  to unknown, bounded delay. In the case of a single  communication channel,  we can consider the same triggering strategy, but an additional $\ceil{\log n}$ bits should be appended at the beginning of each packet to identify the coordinate it belongs to.

For deriving our necessary and sufficient conditions for vector system, we assume that all of the eigenvalues of $A$ are real. Recall that every $A \in M_{n,n}(\mathbb{R})$ can be written as $\Phi \Psi
\Phi^{-1}$, where $\Phi$ is a real-valued invertible matrix and
$\Psi=\text{diag}[J_1,\ldots, J_q]$, where each $J_j$ is a Jordan
block corresponding to the real-valued eigenvalue $\lambda_j$ of $A$
\cite{Prasolov}. We let $d_j$ indicate  the order of each $J_j$. 
Without loss of generality assume $A$ is equal to its Jordan block decomposition, that is,
 $A=\text{diag}[J_1,\ldots, J_q]$. 
With the notation of Section~\ref{sec:setup}, we let $x_i^j(t)$, $\hat{x}_i^j(t)$, and $z_i^j(t)$ be the state, state estimation, and state estimation error 
 for the $i^{th}$ coordinate of $j^{th}$ Jordan block, respectively.  
For each
coordinate $i$ of the $j^{th}$ Jordan block we let $\{
t_{s,i}^{k,j} \}_{k \in \integers}$, $\{ t_{c,i}^{k ,j}\}_{k \in \integers}$, $g(t_{s,i}^{k,j})$ be the sequence of transmission times, reception times, and the number of bits that  are transmitted at each triggering time. Similarly,
the $k^{th}$ \emph{communication delay} and $k^{th}$ \emph{triggering interval} can be specified for each coordinate.  Following~\eqref{gammma}, we have
\begin{align}\label{GammaHD}
  \Delta_{k,i}^j= t_{c,i}^{k,j}-t_{s,i}^{k,j} \leq \gamma.
\end{align}
When referring to a generic triggering or reception time, we shall
skip the superscript $k$ in $t_{s,i}^{k,j}$ and $t_{c,i}^{k,j}$.

An event is triggered
for coordinate $i$ in Jordan block $j$ whenever
\begin{subequations}\label{eq:event-trig-strat}
\begin{align}\label{eq:etsHHD}
 |z_i^j(t_{s,i}^j)|=v_i^j(t_{s,i}^j),
\end{align} 
where $v_i^j(t)$ is the
\emph{event-triggering function}
\begin{align}\label{eq:etfHD}
v_i^j(t)=v_{0,i}^j e^{-\sigma t},
\end{align}
where $v_{0,i}^j$ and $\sigma$ are positive real numbers.

Let $\bar{z}_i^j(t_{c,i}^j)$ be an estimate of ${z}_i^j(t_{c,i}^j)$
constructed by the controller knowing 
$|z_i^j(t_{s,i}^j)|=v_i^j(t_{s,i}^j)$, the bound~\eqref{GammaHD}, and the decoded packet received through the communication channel. We define the following updating procedure,
called \textit{jump strategy}
\begin{align}\label{eq:jumpstHD}
 \hat{x}_i^j(t_{c,i}^{j+})=\bar{z}_i^j(t_{c,i}^j)+\hat{x}_i^j(t_{c,i}^j).
\end{align}
Note that with this jump strategy, we have
\begin{align*}
z_i^j(t_{c,i}^{j+})=x_i^j(t_{c,i}^j)-\hat{x}_i^j(t_{c,i}^{j+})=z_i^j(t_{c,i}^j)-\bar{z}_i^j(t_{c,i}^j).
\end{align*}
When a triggering occurs for coordinate $i$ of the $j^{th}$ Jordan block, we assume that the sensor sends enough bits  to ensure
\begin{align}
  |z_i^j(t_{c,i}^{j+})| \le\rho(t_{s,i}^j):= \rho_0  e^{-\sigma \gamma} v(t_{s,i}^{j}).
  \label{eq:jump-upp-HD}
\end{align}
\end{subequations}
When referring to a generic Jordan block, 
we skip the superscript and subscript~$j$.
For the scalar case we skip the subscript~$i$ too.

The transmission rate for each
coordinate is then
\begin{align*}
  R_{s,i} ^j= \limsup_{N_i^j \rightarrow \infty} \frac{\sum_{k=1}^{N_i^j} g(t_{s,i}^{k,j})}{\sum_{k=1}^{N_i^j}
    \Delta'^j_{k,i}}
\end{align*}

Assuming $n$ parallel communication channels
between the plant and the controller,  each devoted to a
coordinate separately, we have
\begin{align*}
R_s=\sum_{j=1}^{q}\sum_{i=1}^{d_j}R_{s,i}^j.
\end{align*}
 To obtain our necessary condition, we need to restrict the class of
allowed quantization policies.  We assume that, at each triggering
event, there exists a delay such that the sensor can reduce the
estimation error at the controller to at most a fraction of the
maximum value $\rho(t_{s,i}^j)$ required by~\eqref{eq:jump-upp-HD}. This is a
natural assumption, see~\cite{OurJournal1}.
\begin{ansatz}\label{Defnition11}
  The controller can only achieve $\nu$-precision quantization, namely
  there exists $\nu\geq1$, and a delay at most $\beta:=\frac{1}{A} \ln
  (1+2 \rho_0 e^{-\sigma \gamma})$, such that
  \begin{align*}
    |z(t_{c,i}^j)-\bar{z}(t_{c,i}^j)|\geq \frac{\rho(t_{s,i}^j)}{\nu}.
  \end{align*} 
\end{ansatz}



\subsection{Review of results in the scalar case}
The following results  for scalar systems are the building blocks for our vector case derivation and appear in~\cite{OurJournal1}.

\begin{theorem}\label{thm:necc-cond-ET}
Consider the plant-sensor-channel-controller model described in
  Section~\ref{sec:setup} with plant dynamics~\eqref{syscon}, estimator   dynamics~\eqref{sysest}, and $n=1$. 
If  using the event-triggering
  strategy~\eqref{eq:event-trig-strat}, packet sizes such that $z(t_c)$ is determined at
  the controller within a ball of radius $\rho(t_{s})=\rho_0
  e^{-\sigma \gamma} v(t_{s})$ with $\nu$-precision, and  the state estimation error satisfies \eqref{tt-5d},
then there exists a delay realization $\{ \Delta_k
  \}_{k \in \integers}$ which requires
\begin{align}\label{necessT}
    R_s \ge \frac{A+\sigma}{\ln\nu+\ln(2+\frac{e^{\sigma
          \gamma}}{\rho_0})}\max \left\{0,\log \frac{(e^{A
          \gamma}-1)}{ \rho_0 e^{-\sigma \gamma} }\right\}.
\end{align}
Moreover, when $\sigma$ is sufficiently large the result can be approximated by 
\begin{align}
\label{approxnec}
  R_s \geq \frac{A+\sigma }{\ln 2}
  \max \left\{0 ,1+\frac{\log (e^{A \gamma}-1)}{-\log (\rho_0
      e^{-\sigma \gamma})}\right\}. 
\end{align}
\end{theorem}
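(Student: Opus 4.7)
The plan is to lower bound $R_s=\limsup_N \sum_k g(t_s^k)/\sum_k \Delta'_k$ by separately producing (i) a lower bound on the number of bits $g(t_s^k)$ that the sensor must send at each triggering event, and (ii) an upper bound on the inter-triggering interval $\Delta'_k$, both holding for an adversarial realization of $\{\Delta_k\}$. The crucial kinematic fact is that between $t_s^k$ and $t_c^k$ the estimation error satisfies $\dot z = Az$ (since $x$ and $\hat x$ obey the same LTI with the same control), so $z(t_c^k)=e^{A\Delta_k}z(t_s^k)$, and between $t_c^k$ and $t_s^{k+1}$ it again grows as $e^{A(t-t_c^k)} z(t_c^{k+})$ until it meets the decaying threshold $v(t)=v_0 e^{-\sigma t}$.

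For the bit count, the triggering condition~\eqref{eq:etsHHD} tells the controller that $z(t_s^k)=\pm v(t_s^k)$, but it does not reveal the sign, and the delay $\Delta_k\in[0,\gamma]$ is unknown at the time of transmission. Therefore the a priori set of possible values of $z(t_c^k)$ consistent with the triggering event is $\pm[v(t_s^k), e^{A\gamma}v(t_s^k)]$, of Lebesgue measure $2(e^{A\gamma}-1)v(t_s^k)$. Since $\nu$-precision quantization allows cells no finer than $\rho(t_s^k)/\nu$, a volume-packing argument forces
\begin{align*}
g(t_s^k)\;\ge\;\log\frac{(e^{A\gamma}-1)\,v(t_s^k)}{\rho(t_s^k)} \;=\;\log\frac{e^{A\gamma}-1}{\rho_0 e^{-\sigma\gamma}},
\end{align*}
which is exactly the quantity in $\max\{0,\cdot\}$ of~\eqref{necessT}. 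When the argument is $\le 0$, the timing information alone fits inside one quantization cell and no bits are necessary, explaining the $\max$ with $0$.

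For the triggering interval, I would use the existence guaranteed by Assumption~\ref{Defnition11}: pick a delay $\Delta_k\le \beta=\frac{1}{A}\ln(1+2\rho_0 e^{-\sigma\gamma})$ and a quantizer output that saturates the precision bound, so that $|z(t_c^{k+})|\ge \rho(t_s^k)/\nu$. Matching $|z(t_s^{k+1})|=v(t_s^{k+1})$ gives
\begin{align*}
e^{A(t_s^{k+1}-t_c^k)} \frac{\rho(t_s^k)}{\nu}\;\le\; v_0 e^{-\sigma t_s^{k+1}},
\end{align*}
from which $t_s^{k+1}-t_c^k\le \frac{1}{A+\sigma}\ln\!\bigl(\nu e^{\sigma(\gamma-\Delta_k)}/\rho_0\bigr)$. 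Adding $\Delta_k$, maximizing over $\Delta_k\le\beta$, and substituting the definition of $\beta$ yields an upper bound of the form $\Delta'_k\le\frac{\ln\nu+\ln(2+e^{\sigma\gamma}/\rho_0)}{A+\sigma}$; the ``$2$'' emerges precisely from the $1+2\rho_0 e^{-\sigma\gamma}$ in $\beta$. Dividing the lower bound on bits by this upper bound on $\Delta'_k$ and taking $\limsup$ produces~\eqref{necessT}. Finally, for large $\sigma$, $\ln\nu$ becomes negligible and $\ln(2+e^{\sigma\gamma}/\rho_0)\approx -\ln(\rho_0 e^{-\sigma\gamma})$, so after converting $\log$ and $\ln$ one recovers~\eqref{approxnec}.

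The main technical obstacle I anticipate is the bookkeeping of constants in the upper bound on $\Delta'_k$: naively one would get $(A+\sigma)\Delta'_k\le \sigma\gamma-\ln\rho_0+\ln\nu$, but the sharper $\ln(2+e^{\sigma\gamma}/\rho_0)$ requires using the explicit form of $\beta$ from Assumption~\ref{Defnition11} when identifying the worst-case delay, and carefully tracking whether $|z(t_c^{k+})|$ is compared to $\rho(t_s^k)$ or to the quantizer's minimal reachable error. Everything else is a direct exponential-growth calculation and a standard counting argument.
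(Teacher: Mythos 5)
Your reconstruction is the intended argument (the paper itself defers this proof to the journal version), and every constant checks out: the covering count over the uncertainty interval $\pm[v(t_s^k),e^{A\gamma}v(t_s^k)]$ gives $g(t_s^k)\ge\log\frac{e^{A\gamma}-1}{\rho_0 e^{-\sigma\gamma}}$, and $(A+\sigma)\Delta'_k\le\ln\nu+\sigma\gamma-\ln\rho_0+A\beta=\ln\nu+\ln\bigl(2+\frac{e^{\sigma\gamma}}{\rho_0}\bigr)$ exactly as you derive, yielding~\eqref{necessT} and, for large $\sigma$,~\eqref{approxnec}. One small slip in the justification (not the formula): the covering radius in the bit-count step is $\rho(t_s^k)$ itself, coming from the requirement~\eqref{eq:jump-upp-HD}, not $\rho(t_s^k)/\nu$; the $\nu$-precision of Assumption~\ref{Defnition11} enters only through the lower bound $|z(t_{c}^{k+})|\ge\rho(t_s^k)/\nu$ in the inter-transmission-time estimate, which is where you actually use it.
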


We also have a corresponding sufficient condition  for the scalar case.
\begin{theorem}\label{thm:suf-cond-ET} 
Consider the plant-sensor-channel-controller model described in
  Section~\ref{sec:setup} with plant dynamics~\eqref{syscon}, estimator   dynamics~\eqref{sysest}, and $n=1$.
If   the state estimation error satisfies  $|z(0)|<v_0$,  using the event-triggering
  strategy~\eqref{eq:event-trig-strat} we can achieve
  \begin{align*}
    |z(t)| \le  v_0 e^{(A+\sigma) \gamma} e^{-\sigma t},
  \end{align*}
  with an information transmission rate
  \begin{align}\label{Sufi}
    R_s \ge
    \frac{A+\sigma}{-\ln(\rho_0 e^{-\sigma \gamma})}
    \max\left\{0,1+\log\frac{b\gamma (A+\sigma)}{\ln(1+\rho_0 e^{-
          (\sigma+A) \gamma})}\right\},
  \end{align}
  where $\rho_0$ is a constant in the interval $(0,1)$, and $b>1$.
\end{theorem}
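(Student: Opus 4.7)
The plan is to construct an explicit encoding--decoding protocol between the sensor and the controller that, combined with the event-triggering rule~\eqref{eq:event-trig-strat}, satisfies the jump requirement~\eqref{eq:jump-upp-HD} and realizes the stated rate. The conceptual difficulty is that at the encoding instant $t_s$ the sensor does not yet know the realized delay $\Delta$, so it cannot quantize $z(t_c)$ directly. The idea is to have the sensor transmit the one-bit sign $s = z(t_s)/v(t_s) \in \{-1,+1\}$ together with a quantized approximation $\hat t_s$ of the triggering time itself, and let the controller, which knows its own reception time $t_c$, reconstruct the estimate using the closed-form state evolution between triggers. Since $\dot z = A z$ on each inter-reception interval and $|z(t_s)|=v(t_s)$, one has $z(t_c) = s\, e^{A(t_c - t_s)} v(t_s)$, so the controller forms
\begin{align*}
\bar z(t_c) = \hat s\, e^{A(t_c - \hat t_s)}\, v(\hat t_s)
\end{align*}
and applies the jump~\eqref{eq:jumpstHD}.

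Next I would carry out the error analysis to pick the required timestamp precision. Assuming $\hat s = s$ and writing $\delta = |t_s - \hat t_s|$, a direct expansion using $v(t) = v_0 e^{-\sigma t}$ gives
\begin{align*}
|z(t_c^+)| = |z(t_c) - \bar z(t_c)| \le e^{A\Delta}\bigl[e^{(A+\sigma)\delta} - 1\bigr]\, v(t_s) \le e^{A\gamma}\bigl[e^{(A+\sigma)\delta} - 1\bigr]\, v(t_s).
\end{align*}
Requiring the right-hand side to be at most $\rho(t_s) = \rho_0 e^{-\sigma\gamma} v(t_s)$ and solving for $\delta$ yields the sufficient precision
\begin{align*}
\delta^{\star} = \frac{\ln\bigl(1+\rho_0 e^{-(A+\sigma)\gamma}\bigr)}{A+\sigma},
\end{align*}
which is exactly what makes the jump requirement~\eqref{eq:jump-upp-HD} hold. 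Starting from the initial bound $|z(0)| < v_0$ and iterating, the peak of $|z|$ on each cycle occurs just before reception with $|z(t_c^-)| \le e^{A\gamma}\, v(t_s)$; combining with $t_s \ge t - \gamma$ and $v(t_s) = v_0 e^{-\sigma t_s}$ gives the global bound $|z(t)| \le v_0 e^{(A+\sigma)\gamma} e^{-\sigma t}$.

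Finally, I would assemble the rate. On each cycle $|z|$ restarts at $\le \rho(t_s^k)$ at time $t_c^k$ and grows as $e^{A(t-t_c^k)}$ until the next trigger, yielding $(A+\sigma)\Delta_k' = A\Delta_k + \sigma\gamma - \ln\rho_0$. Minimizing over $\Delta_k \in [0,\gamma]$ gives $\Delta_k' \ge \frac{-\ln(\rho_0 e^{-\sigma\gamma})}{A+\sigma}$, while maximizing gives $\Delta_k' \le b\gamma$ with $b = 1 + \frac{-\ln\rho_0}{\gamma(A+\sigma)} > 1$. Thus the sensor can encode $\hat t_s$ (e.g.\ as an offset from the previous triggering time, using shared clocks to avoid error accumulation) within a range $b\gamma$ at precision $\delta^{\star}$, costing $\lceil \log(b\gamma/\delta^{\star})\rceil$ bits, so $g(t_s^k) = 1 + \lceil \log(b\gamma/\delta^{\star})\rceil$. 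Inserting these into~\eqref{rss} with the worst-case minimum inter-trigger interval and substituting the expression for $\delta^{\star}$ recovers~\eqref{Sufi}. The main hurdle, handled in the middle step, is establishing the precise coupling between the timestamp precision $\delta$ and the reception-time state-estimation error; once the formula $e^{A\gamma}\bigl(e^{(A+\sigma)\delta}-1\bigr)$ is identified, the rest is exponential-growth bookkeeping under the worst-case delay realization.
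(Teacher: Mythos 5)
Your construction is, in substance, the scheme behind this theorem (the paper itself omits the proof and defers to~\cite{OurJournal1}, but the ingredients you identify are the right ones): transmit the sign of $z(t_s)$ plus a quantized triggering time, let the controller form $\bar z(t_c)=\hat s\,e^{A(t_c-\hat t_s)}v(\hat t_s)$, and your error bound $|z(t_c^+)|\le e^{A\gamma}\bigl(e^{(A+\sigma)\delta}-1\bigr)v(t_s)$ correctly yields the timestamp precision $\delta^{\star}=\ln\bigl(1+\rho_0e^{-(A+\sigma)\gamma}\bigr)/(A+\sigma)$ that appears in~\eqref{Sufi}. The lower bound $\Delta_k'\ge -\ln(\rho_0e^{-\sigma\gamma})/(A+\sigma)$ on the inter-transmission times (the same bound the paper quotes from Lemma~3 of~\cite{OurJournal1} in Remark~3), the decay bound $|z(t)|\le v_0e^{(A+\sigma)\gamma}e^{-\sigma t}$, and the final division of the packet size by the minimum inter-event time are all argued correctly and reproduce the stated rate.

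The one step that does not hold as written is the claim $\Delta_k'\le b\gamma$. Your identity $(A+\sigma)\Delta_k'=A\Delta_k+\sigma\gamma-\ln\rho_0$ assumes the post-jump error equals $\rho(t_s^k)$ exactly, whereas the protocol only guarantees $|z(t_c^{k+})|\le\rho(t_s^k)$; if the quantizer happens to land close to (or on) $z(t_c^k)$, the next triggering can be arbitrarily late, so the inter-transmission times admit no upper bound and ``encode $\hat t_s$ as an offset from the previous triggering time within a range $b\gamma$'' fails for such realizations. The repair is the standard modular-timestamp device, which is also what makes the window $b\gamma$ with $b>1$ arbitrary rather than your specific $b$: upon reception at $t_c^k$ the controller already knows $t_s^k\in[t_c^k-\gamma,t_c^k]$, an interval of length $\gamma$; the sensor places $t_s^k$ on a fixed global grid of spacing $\delta^{\star}$ and transmits the grid index modulo $\lceil b\gamma/\delta^{\star}\rceil$, which the controller decodes uniquely because its uncertainty window contains fewer than that many grid cells. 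This removes any need to bound $\Delta_k'$ from above; with that substitution your argument is complete.
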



\begin{remark}\label{TTVSET}
{\rm
  %
  %

\begin{figure}[t]
  \centering
  \includegraphics[width=8.2cm]{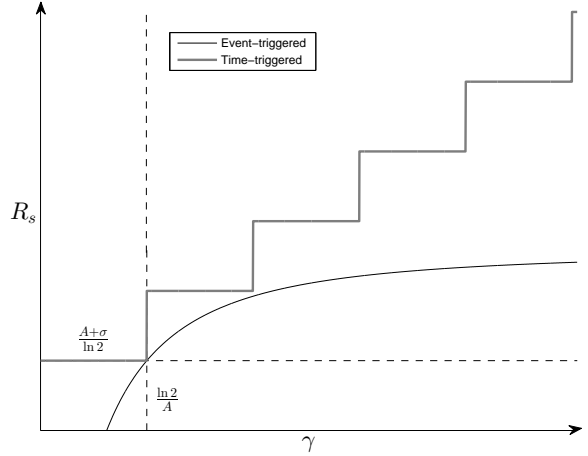}
  \caption{Illustration of the necessary bit rate for time-triggering
    control of a scalar plant~\eqref{TTnESGMTRH} and approximation of
    the necessary bit rate for event-triggering control of a scalar
    plant~\eqref{approxnec} versus the worst-case delay in the
    communication channel. For the time-triggered scheme, $T=\ln
    2/A$.}\label{fig:ness}
  \vspace*{-1ex}
\end{figure}
Figure~\ref{fig:ness} compares the results of Theorem~\ref{thm:necc-TT1111} and Theorem~\ref{thm:necc-cond-ET}.
For small values of $\gamma$,  the necessary  transmission rate in Theorem~\ref{thm:necc-cond-ET} becomes, cf.~\cite{OurJournal1},
   \begin{align}\label{smgaet}
        R_s \ge 0.
  \end{align}
On the other hand,
the result of Theorem \ref{thm:necc-TT1111} in the scalar case and for small values of $\gamma$ can be written as 
  \begin{align}\label{smgatt}
        R_s \ge \frac{A+\sigma}{ \ln 2}.
  \end{align}
Comparing \eqref{smgaet} and \eqref{smgatt},  the value of the intrinsic timing information in communication in an event-triggered design becomes evident. When the delay is small, the timing information carried by the triggering events is substantial and ensures that controller can stabilize the system. In contrast, for small values of the delay the information transmission rate required by a time-triggered implementation equals the information access rate required by the classic data-rate theorem.


For large delay values, it can be easily shown that while both the necessary and sufficient conditions for the event-triggered design in Theorems  \ref{thm:necc-cond-ET} and \ref{thm:suf-cond-ET}   converge to the asymptote $((A+\sigma)/\ln2)(1+A/\sigma)$ as $\gamma \rightarrow \infty$, the time-triggered result in Theorem~\ref{thm:necc-TT1111} for grows linearly as $\gamma \rightarrow \infty$. 
The reason for this difference is that the time-triggered design~\eqref{TTR} depends only on the delay while the event-triggered scheme depends on both state and delay. In both time-triggered and event-triggered schemes the sensor does not have fore-knowledge of the delay, and the sensor  needs to send larger packets when the worst-case delay is larger. 
On the other hand, the triggering rate in the event-triggering case tends to zero as $\gamma$ tends to infinity.
More precisely, using Lemma 3 of~\cite{OurJournal1} in the event-triggering setting  for all of the possible realizations we have 
   \begin{align*}
   t_s^{k+1}-t_s^k \ge \frac{- \ln (\rho_0
      e^{-\sigma \gamma})}{A + \sigma},
  \end{align*}
  which tends to infinity as $\gamma \rightarrow \infty$. 
  In contrast,  in the time-triggered case for delay realization $\Delta_k=0$ for all $k \in \integers$  we have
 \begin{align*}
 t_s^{k+1}-t_s^k= T,
 \end{align*}
and in this case the rate increases linearly with the delay. 
}\oprocend
\end{remark}

\subsection{Necessary and sufficient transmission rate}
We now extend the event-triggering results to the vector~case.
\begin{theorem}\label{thm:Necvector}
Consider the plant-sensor-channel-controller model described in
  Section~\ref{sec:setup}  with plant dynamics~\eqref{syscon}, where all eigenvalues of $A$ are real and $A$ is equal to its Jordan block decomposition, estimator
  dynamics~\eqref{sysest}, event-triggering strategy~\eqref{eq:event-trig-strat}, and packet sizes such that $z_i^j(t_{c,i}^{k,j})$ is determined at
  the controller within a ball of radius $\rho(t_{s,i}^{k,j})=\rho_0
  e^{-\sigma \gamma} v(t_{s,i}^{k,j})$ with $\nu$-precision.
Then there exist a delay realization such that
\begin{align*}
  R_s > \sum_{j=1}^{q}\frac{d_j(\lambda_j+\sigma)}{\ln\nu+\ln(2+\frac{e^{\sigma \gamma}}{\rho_0})}\max \left\{0,\log \frac{(e^{\lambda_j \gamma}-1)}{ \rho_0
      e^{-\sigma \gamma} }\right\}.
\end{align*}
Moreover, when $\sigma$ is sufficiently large the result can be approximated by 
\begin{align*}
   R_s > \sum_{j=1}^{q}\frac{d_j (\lambda_j+\sigma)}{\ln 2} \max
    \left\{0 ,1+\frac{\log (e^{\lambda_j \gamma}-1)}{-\log (\rho_0
        e^{-\sigma \gamma})}\right\}.
\end{align*}

%
%
\end{theorem}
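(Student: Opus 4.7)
The plan is to reduce the vector statement to the scalar result of Theorem~\ref{thm:necc-cond-ET}, applied coordinate by coordinate within each Jordan block. Since $A$ is in Jordan block form with real eigenvalues, the error dynamics decompose block-wise: for coordinate $i$ of Jordan block $J_j$, we have $\dot{z}_i^j = \lambda_j z_i^j + z_{i+1}^j$ for $i<d_j$ and $\dot{z}_{d_j}^j = \lambda_j z_{d_j}^j$, where the control input cancels because both~\eqref{syscon} and~\eqref{sysest} are driven by the same $u$. The event-triggering strategy~\eqref{eq:event-trig-strat} is applied to each coordinate individually, so each coordinate maintains $|z_i^j(t)|\le v_i^j(t)=v_{0,i}^j e^{-\sigma t}$ by construction.

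First, I would treat the bottom coordinate $i=d_j$ of each Jordan block. Its error dynamics are exactly scalar with growth rate $\lambda_j$, and the triggering rule, the $\nu$-precision quantization assumption, and the jump strategy~\eqref{eq:jumpstHD}--\eqref{eq:jump-upp-HD} match the hypotheses of Theorem~\ref{thm:necc-cond-ET} verbatim. Hence there exists an adversarial delay realization $\{\Delta^j_{k,d_j}\}$ on that coordinate's channel forcing
\begin{align*}
R_{s,d_j}^j \;\ge\; \frac{\lambda_j+\sigma}{\ln\nu+\ln\bigl(2+e^{\sigma\gamma}/\rho_0\bigr)}\,\max\!\left\{0,\log\frac{e^{\lambda_j\gamma}-1}{\rho_0 e^{-\sigma\gamma}}\right\}.
\end{align*}

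Next I would propagate upward in the block, from $i=d_j-1$ down to $i=1$. The only new feature compared to the scalar proof is the additive coupling term $z_{i+1}^j(t)$, which by the event-triggering invariant on coordinate $i+1$ is uniformly bounded by $v_{0,i+1}^j e^{-\sigma t}$ — a decaying signal whose norm is known to both sensor and controller. This disturbance adds a known, bounded, exponentially decaying drift to the scalar equation for $z_i^j$; it therefore shifts the reachable uncertainty set by a deterministic amount but does not alter its \emph{growth rate}, which remains~$\lambda_j$. Consequently the volume-counting argument underlying Theorem~\ref{thm:necc-cond-ET} goes through unchanged (the admissible initial-condition set still dilates in the $i$-th direction by a factor $e^{\lambda_j \Delta'}$ per triggering interval), yielding the same lower bound $R_{s,i}^j\ge R_{s,d_j}^j$ for every $i=1,\ldots,d_j$.

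Finally, I would choose the adversarial delays independently on the $n$ parallel channels (or equivalently for the $n$ coordinates sharing one channel, at the price of the $\lceil\log n\rceil$ overhead which is absorbed into the limit defining $R_s$), so that all coordinate-level bounds hold simultaneously for a single global delay realization. Summing
\begin{align*}
R_s \;=\; \sum_{j=1}^{q}\sum_{i=1}^{d_j} R_{s,i}^j \;\ge\; \sum_{j=1}^{q} d_j \cdot R_{s,d_j}^j
\end{align*}
gives the first displayed inequality. The second (large-$\sigma$ approximation) follows by the same simplification used after~\eqref{necessT} to pass to~\eqref{approxnec}, applied term-by-term in the sum. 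The main obstacle is the second step: making precise that the Jordan coupling term $z_{i+1}^j$, although it distorts the geometry of the uncertainty set, does not reduce the exponential expansion rate of its $i$-th component below $\lambda_j$, so that the scalar counting bound still applies with rate $\lambda_j$ rather than something smaller.
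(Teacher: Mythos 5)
Your overall strategy is the one the paper intends: the conference version omits this proof (deferring to \cite{OurJournal1}), but Section~\ref{HDprelim} sets up exactly the reduction you use --- per-coordinate triggering functions, $n$ parallel channels (or one channel with a $\ceil{\log n}$ header absorbed in the rate limit), and $R_s=\sum_{j}\sum_{i}R_{s,i}^j$ --- so that each coordinate of block $j$ contributes the scalar bound of Theorem~\ref{thm:necc-cond-ET} with $A$ replaced by $\lambda_j$, and the factor $d_j$ appears by summing over the block. Applying the scalar theorem verbatim to the bottom coordinate and then summing is correct.

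The one genuine soft spot is the step you yourself flag, and your justification for it is stated incorrectly: the coupling term $z_{i+1}^j(t)$ is \emph{not} known to the controller (only a decaying bound on it is), so it is not a ``deterministic shift'' of the uncertainty set --- it is an additional, unknown affine perturbation. This actually works in your favor for a necessary condition, but you still must rule out the possibility that the superdiagonal coupling causes cancellation that shrinks the \emph{marginal} uncertainty in coordinate $i$ below the scalar growth rate. The clean way to close this is to note that (a) the controller's uncertainty set for the block is (contains) a product of intervals, one per coordinate, because each coordinate is quantized separately; and (b) the flow map $e^{J_j\Delta}$ is triangular with diagonal entries $e^{\lambda_j\Delta}$, so the projection onto coordinate $i$ of the image of a box has length at least $e^{\lambda_j\Delta}$ times the $i$-th side length, i.e.\ $\sum_{l}|(e^{J_j\Delta})_{il}|\,s_l\ge e^{\lambda_j\Delta}s_i$. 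Equivalently, for any fixed admissible trajectory of the coordinates below, the map from $z_i^j(t_s)$ to $z_i^j(t_c)$ is affine with slope $e^{\lambda_j\Delta}$, so the interval of consistent values dilates by exactly that factor. With that observation the scalar counting argument of Theorem~\ref{thm:necc-cond-ET} applies to every coordinate with rate $\lambda_j$, and the rest of your argument (simultaneous adversarial delays on independent channels, the large-$\sigma$ simplification as in \eqref{necessT}$\to$\eqref{approxnec}) goes through.
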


\begin{theorem}
\label{thm:Sufvector}
Consider the plant-sensor-channel-controller model described in
  Section~\ref{sec:setup} with plant dynamics~\eqref{syscon}, where all eigenvalues of $A$ are real and $A$ is equal to its Jordan block decomposition, estimator
  dynamics~\eqref{sysest},  and event-triggering strategy~\eqref{eq:event-trig-strat}. For the $j^{\text{th}}$ Jordan block choose the following sequence of design parameters
  \begin{align*}
   0<\rho_{1}^j<\ldots<\rho_{d_j-1}^j<\rho_{d_j}^j=\rho_0<1.
  \end{align*}
If the state estimation error satisfies $|z_i^j(0)|\le v_{0,i}^j$, then we can achieve  
\begin{align*}
|z_{i}^j(t)|\le v_{0,i}^j
    ((\rho_0-\rho_{i}^j)+e^{(\lambda_j+\sigma)\gamma}) e^{-\sigma t}
\end{align*}
for $i=1,\ldots, d_j$ and $j=1,\ldots,q$,
  with an information transmission rate $R_s$ at least equal to
  \begin{small}
\begin{align*}
  \sum_{j=1}^{j=q}\sum_{i=1}^{i=d_j}\frac{ (\lambda_j+\sigma)}{-\ln(\rho_i^j e^{-\sigma \gamma})} \max\left(0,1+\log\frac{b\gamma (\lambda_j+\sigma)}{\ln(1+\rho_i^j  e^{- (\sigma+\lambda_j) \gamma})}\right),
  \end{align*}
  \end{small}
where
  \begin{align}\label{GRRMARTIN}
 0<v_{0,i}^j \le \frac{v_{0,i-1}^j
      (\lambda_j+\sigma)(\rho_0-\rho_{i}^j)}{((\rho_0-\rho_{i}^j)+e^{(\lambda_j+\sigma)\gamma})(e^{(\lambda_j+\sigma)\gamma}-1)}, 
\end{align}
for $i=2,\ldots,d_j$ and $j=1,\ldots,q$, and $b>1$.
\end{theorem}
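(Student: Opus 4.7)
The plan is to exploit the block-triangular cascade structure of the Jordan form to reduce the vector problem to a collection of perturbed scalar problems and apply Theorem~\ref{thm:suf-cond-ET} coordinate by coordinate. Since $A$ is assumed to equal its Jordan block decomposition, the dynamics of distinct Jordan blocks decouple in both the plant and the estimator, so both the per-coordinate error bound and the rate contributions add over blocks. It therefore suffices to argue within a single block $J_j$, where the error dynamics between triggering events take the cascade form $\dot{z}_i^j = \lambda_j z_i^j + z_{i-1}^j$ (with $z_0^j \equiv 0$), and then sum the resulting rates $R_{s,i}^j$ over $i = 1,\ldots,d_j$ and $j=1,\ldots,q$.

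Within a block I would induct on $i$. In the base case $i=1$ the coordinate is genuinely scalar, driven only by $\lambda_j$, so Theorem~\ref{thm:suf-cond-ET} with $A \leftarrow \lambda_j$ and $\rho_0 \leftarrow \rho_1^j$ yields $|z_1^j(t)| \le v_{0,1}^j e^{(\lambda_j+\sigma)\gamma} e^{-\sigma t}$, which is dominated by $v_{0,1}^j((\rho_0-\rho_1^j)+e^{(\lambda_j+\sigma)\gamma})e^{-\sigma t}$ because $\rho_0 - \rho_1^j>0$. For the inductive step, suppose the claimed bound holds for coordinate $i-1$. Solving the scalar ODE for $z_i^j$ forward from a reception time gives
\[
z_i^j(t) = e^{\lambda_j(t-t_{c,i}^j)}\, z_i^j(t_{c,i}^{j+}) + \int_{t_{c,i}^j}^{t} e^{\lambda_j(t-\tau)} z_{i-1}^j(\tau)\,d\tau,
\]
so that the coupling appears as an exogenous disturbance whose magnitude, by the inductive hypothesis, is bounded by a multiple of $v_{0,i-1}^j e^{-\sigma\tau}$. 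Computing the integral (a $\lambda_j,-\sigma$ exponential mismatch producing a factor $(e^{(\lambda_j+\sigma)\gamma}-1)/(\lambda_j+\sigma)$ over an interval of length at most $\gamma$) reduces the evolution of $z_i^j$ to the scalar one plus a disturbance term that decays like $e^{-\sigma t}$.

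The main obstacle, and the reason~\eqref{GRRMARTIN} takes its specific form, is ensuring that the coupling-induced perturbation neither triggers events prematurely nor inflates the target bound. Concretely, I would verify that the factor in~\eqref{GRRMARTIN} is exactly what is needed so that the integrated disturbance contributes at most $v_{0,i}^j(\rho_0-\rho_i^j) e^{-\sigma t}$ while the homogeneous part of $z_i^j$ contributes at most $v_{0,i}^j e^{(\lambda_j+\sigma)\gamma} e^{-\sigma t}$; adding these two reproduces the claimed bound. The same bookkeeping argument shows that the minimum inter-triggering time for coordinate $i$ is at least as long as in the scalar analysis of Theorem~\ref{thm:suf-cond-ET}, now with parameters $(\lambda_j,\rho_i^j)$ in place of $(A,\rho_0)$, since replacing $\rho_0$ by the strictly smaller $\rho_i^j$ leaves the scalar sufficiency argument intact after absorbing the perturbation into the residual quantization budget.

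Once the inter-triggering lower bound is in hand, the rate contribution from coordinate $(i,j)$ is obtained verbatim from~\eqref{Sufi} of Theorem~\ref{thm:suf-cond-ET} with $A\leftarrow\lambda_j$ and $\rho_0\leftarrow\rho_i^j$, producing the per-coordinate term
\[
\frac{\lambda_j+\sigma}{-\ln(\rho_i^j e^{-\sigma\gamma})}\max\!\left\{0,1+\log\frac{b\gamma(\lambda_j+\sigma)}{\ln(1+\rho_i^j e^{-(\sigma+\lambda_j)\gamma})}\right\}.
\]
Summing first over $i=1,\ldots,d_j$ and then over $j=1,\ldots,q$ yields the total transmission rate stated in the theorem. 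The two non-routine ingredients are therefore: the cascade induction with the disturbance bookkeeping, and the verification that~\eqref{GRRMARTIN} is precisely the initial-threshold ratio needed to keep the per-coordinate invariant closed under the Jordan coupling.
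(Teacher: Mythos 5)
Your overall strategy --- decoupling across Jordan blocks, treating the off-diagonal coupling inside a block as an exponentially decaying disturbance, applying the scalar Theorem~\ref{thm:suf-cond-ET} per coordinate with $\rho_0$ replaced by $\rho_i^j$ so that the slack $\rho_0-\rho_i^j$ absorbs the coupling, and summing the per-coordinate rates --- is the right one, and it is the approach the authors take (the proof itself is deferred to~\cite{OurJournal1}). However, the one genuinely new step, the verification that~\eqref{GRRMARTIN} closes the induction, fails under the cascade orientation you chose. You posit $\dot z_i^j=\lambda_j z_i^j+z_{i-1}^j$ with $z_0^j\equiv 0$, base case $i=1$, inducting upward. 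With that orientation the disturbance entering coordinate $i$ has amplitude proportional to $v_{0,i-1}^j$, and requiring it to fit inside coordinate $i$'s budget $v_{0,i}^j(\rho_0-\rho_i^j)$ yields a \emph{lower} bound on $v_{0,i}^j$ in terms of $v_{0,i-1}^j$ --- the reverse of~\eqref{GRRMARTIN}. Rearranging~\eqref{GRRMARTIN} as
\begin{align*}
v_{0,i}^j\bigl((\rho_0-\rho_{i}^j)+e^{(\lambda_j+\sigma)\gamma}\bigr)\,\frac{e^{(\lambda_j+\sigma)\gamma}-1}{\lambda_j+\sigma}\;\le\; v_{0,i-1}^j(\rho_0-\rho_{i}^j)
\end{align*}
shows that the left side is exactly (bound coefficient of $|z_i^j|$) times the integration factor, and the right side is a budget measured in units of $v_{0,i-1}^j$: coordinate $i$ perturbs coordinate $i-1$, i.e., $\dot z_{i-1}^j=\lambda_j z_{i-1}^j+z_i^j$ (the standard superdiagonal Jordan form). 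Consistently, the theorem's error bound collapses to the unperturbed scalar bound $v_{0,d_j}^je^{(\lambda_j+\sigma)\gamma}e^{-\sigma t}$ precisely at $i=d_j$ because $\rho_{d_j}^j=\rho_0$, so the decoupled base case is $i=d_j$ and the induction must run downward. Your base case at $i=1$ (where you discard the $(\rho_0-\rho_1^j)$ allowance as mere slack) sits at the wrong end: coordinate $1$ is the most heavily perturbed one and needs that allowance.

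Two further points would need to be made precise even after fixing the orientation. First, the coupling acts over the whole inter-reception interval, not only over the delay window of length $\gamma$; you need to argue separately about (a) the growth of the error between a triggering and the corresponding reception, which fixes the quantization requirement and the peak bound, and (b) the effect of the disturbance on the time for the post-jump error to climb back to the threshold, which is what actually determines the inter-event time and hence the rate. Your assertion that the scalar inter-event bound from Theorem~\ref{thm:suf-cond-ET} carries over ``verbatim'' with $\rho_0\leftarrow\rho_i^j$ is the claim that the entire $(\rho_0-\rho_i^j)$ allowance suffices for both effects; as written it is only asserted, and it is exactly the bookkeeping that produces the specific constants in~\eqref{GRRMARTIN} and in the rate expression. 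Second, the substitution into~\eqref{Sufi} itself is fine once (a) and (b) are established.
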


\section{Conclusions}\label{sec:conc}
We investigated observability and stabilizability of a continuous-time scalar systems without disturbances in the presence of a finite rate digital communication channel subjected to unknown delay in the feedback loop. 
Our previous results about inherent information in event-triggered strategy have been extended to the vector case and compared with two time-triggered designs.   Open problems for future research include studying the effect of system disturbances and obtaining exponential convergence guarantees for the stabilizability of the system.
\section*{Acknowledgements}
M. J. Khojasteh wishes to thank Dr. Daniel Liberzon, and Mr. Mojtaba Hedayatpour for  helpful discussions. This research was partially supported by NSF award CNS-1446891.

\bibliography{mybib} 
\bibliographystyle{IEEEtran}
 \end{document}